 \font\smallit=cmti10
\renewcommand{\@seccntformat}[1]{\csname the#1\endcsname. }
\newtheorem{theorem}{Theorem}[section]
 \newtheorem{lemma}[theorem]{Lemma}
 \newtheorem{proposition}[theorem]{Proposition}
 \newtheorem{corollary}[theorem]{Corollary}
 \newtheorem{definition}[theorem]{Definition}
\begin{document}
\begin{center}
{\bf On $p$-adic Gram-Schmidt Orthogonalization Process}
 \vskip 30pt

{\bf Yingpu Deng}\\
 {\smallit  Key Laboratory of Mathematics Mechanization, NCMIS, Academy of Mathematics and Systems Science, Chinese Academy of Sciences, Beijing 100190, People's Republic of China}\\
{and}\\
 {\smallit School of Mathematical Sciences, University of Chinese Academy of Sciences, Beijing 100049, People's Republic of China}\\


 \vskip 10pt

 {\tt dengyp@amss.ac.cn}\\

 \end{center}
 
 \vskip 30pt
 
 \centerline{\bf Abstract} In his famous book ``Basic Number Theory", Weil proved several theorems about the existence of norm-orthogonal bases in finite-dimensional vector spaces and lattices over local fields. In this paper, we transform Weil's proofs into algorithms for finding out various norm-orthogonal bases. These algorithms are closely related to the recently introduced closest vector problem (CVP) in $p$-adic lattices and they have applications in cryptography based on $p$-adic lattices.
 
\vspace{0.5cm}

2010 Mathematics Subject Classification: Primary 11F85, Secondary 94A60.

Key words and phrases: Orthogonalization, $p$-adic lattice, Local field, CVP, LVP.

\noindent

 \pagestyle{myheadings}

 \thispagestyle{empty}
 \baselineskip=12.875pt
 \vskip 20pt

\section{Introduction}
Let $p$ be a prime. Let $V$ be a left vector space over $\mathbb{Q}_p$. A norm $N$ on $V$ is a function
$$N:V\longrightarrow\mathbb{R}$$
such that:
\begin{enumerate}
\item[(i)] $N(v)\geq0,\forall v\in V, \mbox{ and }N(v)=0\mbox{ if and only if }v=0;$
\item[(ii)] $N(xv)=|x|_p\cdot N(v),\forall x\in\mathbb{Q}_p, v\in V;$
\item[(iii)] $N(v+w)\leq\max(N(v),N(w)), \forall v,w\in V.$
\end{enumerate}
Here, $|x|_p$ is the $p$-adic absolute value for a $p$-adic number $x\in\mathbb{Q}_p$.

If $N$ is a norm on $V$, and if $N(v)\neq N(w)$ for $v,w\in V$, then we must have $N(v+w)=\max(N(v),N(w))$. Weil \cite[p.26, prop.3]{wei} proved the following proposition:

\begin{proposition}\cite{wei}\label{orthogonal base}
	Let $V$ be a left vector space over $\mathbb{Q}_p$ of finite dimension $n>0$, and let $N$ be a norm on $V$. Then there is a decomposition $V=V_1+\cdots+V_n$ of $V$ into a direct sum of subspaces $V_i$ of dimension 1, such that
	$$N\left (\sum_{i=1}^nv_i\right)=\max_{1\leq i\leq n}N(v_i), \forall v_i\in V_i, i=1,\ldots,n.$$
\end{proposition}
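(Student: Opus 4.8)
The plan is to argue by induction on the dimension $n$. The base case $n=1$ is trivial: take $V_1=V$, and the required identity reduces to $N(v_1)=N(v_1)$. So suppose $n\ge 2$ and that the proposition holds in dimension $n-1$.

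First I would fix any hyperplane $H\subset V$, that is, an $(n-1)$-dimensional subspace, and apply the induction hypothesis to the normed space $(H,N|_H)$. This yields a decomposition $H=V_1+\cdots+V_{n-1}$ with $N\big(\sum_{i=1}^{n-1}v_i\big)=\max_{1\le i\le n-1}N(v_i)$ for all $v_i\in V_i$; fixing generators $e_i\in V_i$, this says $N(\sum_i x_i e_i)=\max_i|x_i|_p\,N(e_i)$ on $H$. It then suffices to produce a one-dimensional $V_n$ with $V=H\oplus V_n$ and
$$N(h+v_n)=\max\big(N(h),N(v_n)\big)\qquad\text{for all }h\in H,\ v_n\in V_n,$$
since then, for $v_i\in V_i$, one gets $N\big(\sum_{i=1}^{n}v_i\big)=\max\big(N(\sum_{i=1}^{n-1}v_i),N(v_n)\big)=\max_{1\le i\le n}N(v_i)$.

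To build $V_n$ I would pick $v_0\in V\setminus H$ and set $\mu:=\inf_{h\in H}N(v_0-h)$, the ``distance'' from $v_0$ to $H$. The crucial point is that this infimum is \emph{attained}. Since $N(v_0-h)\le N(v_0)$ implies $N(h)\le\max(N(v_0),N(v_0-h))=N(v_0)$, every $h$ outside the set $S:=\{h\in H:N(h)\le N(v_0)\}$ satisfies $N(v_0-h)>N(v_0)=N(v_0-0)$, so the infimum over $H$ equals the infimum over $S$. In the coordinates $(x_1,\dots,x_{n-1})$ above, $S$ is a product of closed balls of $\mathbb{Q}_p$, hence compact by local compactness of $\mathbb{Q}_p$, and $h\mapsto N(v_0-h)$ is continuous because $|N(u)-N(u')|\le N(u-u')$; therefore the infimum is a minimum. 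Picking $h_0\in S$ with $N(v_0-h_0)=\mu$, I set $e_n:=v_0-h_0$ (nonzero, as $e_n\notin H$) and $V_n:=\mathbb{Q}_p e_n$, so that $V=H\oplus V_n$ and $N(e_n)=\mu$.

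Finally I would verify the displayed identity. The case $v_n=0$ is trivial, so write $v_n=ce_n$ with $c\in\mathbb{Q}_p^{\times}$; putting $h'=-c^{-1}h\in H$, minimality of $\mu$ gives
$$N(h+ce_n)=|c|_p\,N(e_n-h')\ \ge\ |c|_p\,\mu\ =\ N(ce_n).$$
Together with the ultrametric bound $N(h+ce_n)\le\max(N(h),N(ce_n))$ this forces $N(h+ce_n)=\max(N(h),N(ce_n))$: when $N(h)\ne N(ce_n)$ this is the property recalled just before the proposition, and when $N(h)=N(ce_n)$ the lower bound $N(ce_n)$ and the upper bound $\max(N(h),N(ce_n))=N(ce_n)$ agree. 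This closes the induction. The only non-formal ingredient is the attainment of $\mu$; I expect the write-up to concentrate its effort there, either through the compactness argument above or, equivalently, through the standard fact that closed bounded subsets of a finite-dimensional normed $\mathbb{Q}_p$-vector space are compact. (The step ``choose $e_n$ to minimize the distance to the hyperplane $H$'' is precisely the $p$-adic Gram--Schmidt step underlying the algorithms of this paper.)
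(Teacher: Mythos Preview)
Your proof is correct and follows essentially the same induction-on-$n$ scheme as Weil's proof reviewed in the paper: fix a hyperplane, produce a complementary line by an extremal choice on a compact set, and verify $N$-orthogonality. Your extremal step (minimize $N(v_0-h)$ over a compact ball in $H$) is exactly the reformulation of Weil's ``maximize $|f(v)|_p/N(v)$ on $I$'' that the paper carries out in Section~\ref{gram-schmidt}, and your compactness justification via the orthogonal coordinates on $H$ is a clean variant of Weil's use of the compactness of $I$.
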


Weil proved the above proposition for finite-dimensional vector spaces over a $p$-field $K$ (commutative or not). For simplicity, we only consider the case $K=\mathbb{Q}_p$. So we can define the $N$-orthogonal basis.
\begin{definition}[$N$-orthogonal basis]
	Let $V$ be a left vector space over $\mathbb{Q}_p$ of finite dimension $n>0$, and let $N$ be a norm on $V$.  We call $\alpha_1,\ldots,\alpha_n$  an $N$-orthogonal basis of $V$ over $\mathbb{Q}_p$ if  $V$ can be decomposed  into the direct sum of $n$ 1-dimensional subspaces $V_i$'s $(1\leq i\leq n)$, such that
	$$N\left(\sum_{i=1}^nv_i\right)=\max_{1\leq i\leq n}N(v_i), \forall v_i\in V_i, i=1,\ldots,n,$$
	where  $V_i$ is spanned by $\alpha_i$. Two subspaces $U,W$ of $V$ is said to be $N$-orthogonal if the sum $U+W$ is a direct sum and it holds that $N(u+w)=\max(N(u),N(w))$ for all $u\in U,w\in W$.
\end{definition}

 Weil's proof is not constructive, and he did not give an algorithm to find out an $N$-orthogonal basis. In recent years, new computational problems in $p$-adic lattices are introduced in \cite{deng2} (also see \cite{deng1}), and these computational problems can be applied to construct cryptographic schemes (see \cite{deng3}). Norm-orthogonal basis plays a crucial role in the construction of these cryptographic schemes (see \cite{deng3}). So, it is natural to ask whether there are algorithms to find out norm-orthogonal bases.
 
 In the next section, we revisit Weil's proof of Proposition \ref{orthogonal base} and in section \ref{gram-schmidt} we will transform Weil's proof into an algorithm for finding out an $N$-orthogonal basis. In section \ref{simul}, we give an algorithm for finding out an $N$-, $N'$-orthogonal basis for two norms simultaneously. In section \ref{laorth}, we consider the orthogonalization process in $p$-adic lattices.

\section{Weil's proof of Proposition \ref{orthogonal base}}
We now revisit Weil's proof of Proposition \ref{orthogonal base}, see \S 1 of Chapter II of Weil's book \cite{wei} (pages 24-27). Using induction on the dimension $n$ of $V$ over $\mathbb{Q}_p$, the case $n=1$ is obvious. Suppose $n>1$. Let $u_1,u_2,\ldots,u_n$ be a $\mathbb{Q}_p$-basis of $V$. Let $W$ be the subspace of $V$ of dimension $n-1$ spanned by $u_2,\ldots,u_n$. Define a linear form
$$f:V\longrightarrow \mathbb{Q}_p,$$
$$f(a_1u_1+a_2u_2+\cdots+a_nu_n)=a_1.$$
Then we have Ker$(f)=W$. Weil proved that a nonzero vector $v_1\in V$ which is $N$-orthogonal to $W$ can be put a maximum point of the continuous function
$$\frac{|f(v)|_p}{N(v)}$$
on the compact subset $I:=\{v\in V:p^{-1}\leq N(v)\leq1\}$ of $V$. In the next section, we give a procedure for finding out such a maximum point $v_1$.

\section{$p$-adic orthogonalization process}\label{gram-schmidt}
Keeping the notation in the previous section. We first need a simple lemma.

\begin{lemma}\label{1basis}
Let $V$ be a left vector space over $\mathbb{Q}_p$, and let $N$ be a norm on $V$. Let $U$ be a subspace of $V$ of dimension 1. Then there is a nonzero vector $u\in U$ with $p^{-1}<N(u)\leq1$.
\end{lemma}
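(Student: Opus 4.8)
The plan is to start from an arbitrary nonzero vector in $U$ and rescale it by a suitable power of $p$ so that its norm lands in the half-open interval $(p^{-1},1]$; this is the $p$-adic analogue of normalizing a real vector to unit length, except that here the available scalars $|x|_p$ for $x\in\mathbb{Q}_p^\times$ form the discrete set $p^{\mathbb{Z}}$, so we cannot hit $N=1$ exactly and must settle for an interval of multiplicative length $p$. Concretely, first pick any $u_0\in U$ with $u_0\neq 0$; such a vector exists because $\dim U=1$. By axiom (i) we have $N(u_0)>0$, so the real number $t:=\log_p N(u_0)$ is well defined.

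Next I would set $k:=\lceil t\rceil\in\mathbb{Z}$, the unique integer with $k-1<t\le k$, and put $u:=p^{k}u_0$. Then $u\neq 0$ and $u\in U$, and by axiom (ii) together with $|p^{k}|_p=p^{-k}$ we obtain $N(u)=|p^{k}|_p\,N(u_0)=p^{-k}N(u_0)=p^{\,t-k}$. From $k-1<t\le k$ we get $-1<t-k\le 0$, hence $p^{-1}<p^{\,t-k}\le 1$, that is, $p^{-1}<N(u)\le 1$, which is exactly the assertion of the lemma.

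There is essentially no obstacle in this argument: the only inputs are the positivity of $N$ on nonzero vectors, the homogeneity $N(xv)=|x|_p N(v)$, the formula $|p^{k}|_p=p^{-k}$, and the archimedean property of $\mathbb{R}$ (existence of the ceiling of a real number). The hypothesis $\dim U=1$ is used only to ensure that $U$ contains a nonzero vector and that the rescaled vector $u$ still lies in $U$; in fact the same proof works verbatim for any nonzero subspace, or indeed for any single nonzero vector of $V$. This normalization lemma will be invoked later so that the basis vectors produced by the orthogonalization process can all be taken with norm in $(p^{-1},1]$.
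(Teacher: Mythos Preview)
Your argument is correct and essentially identical to the paper's: the paper also picks an arbitrary nonzero $v\in U$, chooses the integer $m$ with $p^{m-1}<N(v)\le p^m$ (which is exactly your $k=\lceil\log_p N(u_0)\rceil$), and sets $u=p^{m}v$. The only difference is cosmetic---you phrase the choice of exponent via the ceiling of a logarithm, while the paper states the inequality for $m$ directly.
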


\begin{proof}
Take any nonzero vector $v\in U$. Then there is an integer $m\in\mathbb{Z}$ with $p^{m-1}<N(v)\leq p^m$. Put $u=p^mv$.
\end{proof}

Let $n>1$. By induction on $W$, there is an $N$-orthogonal basis $v_2,\ldots,v_n$ on $W$. By Lemma \ref{1basis}, we can assume $N(u_1),N(v_2),\ldots,N(v_n)\in(p^{-1},1]$.

For an arbitrary element $v$ of $V$, write as $v=a_1u_1+a_2v_2+\cdots+a_nv_n$ with $a_i\in\mathbb{Q}_p$ for $1\leq i\leq n$. Put $w=a_2v_2+\cdots+a_nv_n\in W$, then $v=a_1u_1+w$. The continuous function is
$$\frac{|f(v)|_p}{N(v)}=\frac{|a_1|_p}{N(v)}.$$
Since we want to find a maximum point $v\in I$ of this function, we must have $a_1\neq0$. So the problem turns into finding out a minimum point $v\in I$ with $a_1\neq0$ of the function $N(v/a_1)$.

Suppose $v\in I$. Then we have $p^{-1}\leq N(v)\leq1$. Consider the values of $N(a_1u_1),N(w)$ according to three cases: $<p^{-1},[p^{-1},1],>1$. By the fact, if $N(a_1u_1)\neq N(w)$, then we have $N(v)=N(a_1u_1+w)=\max\{N(a_1u_1),N(w)\}$, it is easy to see that there are only four cases to deal with:

Case 1: $N(a_1u_1)\in[p^{-1},1]$ and $N(w)<p^{-1}$.

Case 2: $N(a_1u_1)<p^{-1}$ and $N(w)\in[p^{-1},1]$.

Case 3: $N(a_1u_1)\in[p^{-1},1]$ and $N(w)\in[p^{-1},1]$.

Case 4: $N(a_1u_1)>1$ and $N(w)>1$.

\begin{proposition}
In Case 1: $N(a_1u_1)\in[p^{-1},1]$ and $N(w)<p^{-1}$, we have
$$N\left(\frac{v}{a_1}\right)=N(u_1)\leq1$$
is constant.
\end{proposition}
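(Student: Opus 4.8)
The plan is to use the sharpened non-archimedean inequality recorded just after the norm axioms: if $N(x)\neq N(y)$, then $N(x+y)=\max(N(x),N(y))$. In Case 1 we are given $N(a_1u_1)\in[p^{-1},1]$ while $N(w)<p^{-1}$, hence $N(w)<p^{-1}\leq N(a_1u_1)$, so in particular the two norms are distinct. Applying the sharpened inequality to the decomposition $v=a_1u_1+w$ then pins down $N(v)$ exactly: $N(v)=\max(N(a_1u_1),N(w))=N(a_1u_1)$.

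The remaining step is a one-line computation using homogeneity. Since $N(a_1u_1)\geq p^{-1}>0$, axiom (i) forces $a_1u_1\neq 0$, hence $a_1\neq 0$ and $v/a_1$ is well defined; then axiom (ii) gives
$$N\!\left(\frac{v}{a_1}\right)=\frac{N(v)}{|a_1|_p}=\frac{N(a_1u_1)}{|a_1|_p}=\frac{|a_1|_p\,N(u_1)}{|a_1|_p}=N(u_1).$$
Because the basis vector $u_1$ was normalized via Lemma \ref{1basis} so that $N(u_1)\in(p^{-1},1]$, we conclude $N(v/a_1)=N(u_1)\leq 1$, and this value is independent of the particular $v$: it is constant over all $v\in I$ falling into Case 1. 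That is precisely the assertion.

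I do not foresee a real obstacle here; the entire content is the observation that in Case 1 the $u_1$-component strictly dominates the $W$-component in norm, so the ultrametric equality determines $N(v)$ and homogeneity finishes the job. The only point deserving a moment's care is that the interval $[p^{-1},1]$ in the case description is closed while the bound $N(w)<p^{-1}$ is strict — it is exactly this strictness that yields $N(a_1u_1)\neq N(w)$ rather than merely $N(a_1u_1)\geq N(w)$, and hence licenses the use of the sharpened inequality rather than the plain one (which would only give $N(v)\leq N(a_1u_1)$).
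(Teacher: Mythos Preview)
Your argument is correct and matches the paper's proof exactly: from $N(w)<p^{-1}\leq N(a_1u_1)$ you invoke the strict ultrametric equality to get $N(v)=N(a_1u_1)$, and then homogeneity yields $N(v/a_1)=N(u_1)\leq 1$. The paper's proof is just a terser version of the same two steps.
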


\begin{proof}
Since $N(w)<N(a_1u_1)$, we have $N(v)=N(a_1u_1+w)=N(a_1u_1)$. The result follows.
\end{proof}

\begin{proposition}
In Case 2: $N(a_1u_1)<p^{-1}$ and $N(w)\in[p^{-1},1]$, we have
$$N\left(\frac{v}{a_1}\right)\geq1.$$
\end{proposition}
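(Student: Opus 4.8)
The plan is to argue exactly as in Case 1, but now tracking the inequalities in the opposite direction and exploiting the fact that $|a_1|_p$ takes values only in the discrete set $\{p^k : k\in\mathbb{Z}\}\cup\{0\}$, together with the normalization $N(u_1)\in(p^{-1},1]$ imposed just before the four cases were listed.

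First I would observe that since $N(a_1u_1)<p^{-1}\leq N(w)$, the two quantities $N(a_1u_1)$ and $N(w)$ are distinct, so by the remark recalled in the introduction (if $N(x)\neq N(y)$ then $N(x+y)=\max(N(x),N(y))$) we get
$$N(v)=N(a_1u_1+w)=N(w).$$
In particular $a_1\neq 0$ (otherwise $N(w)=N(v)<p^{-1}$ would contradict $N(w)\geq p^{-1}$), so dividing by $a_1$ is legitimate and $N(v/a_1)=N(v)/|a_1|_p=N(w)/|a_1|_p$.

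Next I would bound $|a_1|_p$ from above. From $N(a_1u_1)=|a_1|_p\,N(u_1)<p^{-1}$ and $N(u_1)>p^{-1}$ we obtain $|a_1|_p<p^{-1}/N(u_1)<p^{-1}\cdot p=1$. Since $|a_1|_p$ is a (nonzero) integral power of $p$, $|a_1|_p<1$ forces $|a_1|_p\leq p^{-1}$. Combining this with $N(w)\geq p^{-1}$ gives
$$N\!\left(\frac{v}{a_1}\right)=\frac{N(w)}{|a_1|_p}\geq\frac{p^{-1}}{p^{-1}}=1,$$
which is the claim.

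The only point requiring a little care — and the step I would flag as the one most likely to be mishandled — is the passage from the strict inequality $|a_1|_p<1$ to $|a_1|_p\leq p^{-1}$: this is where the discreteness of the $p$-adic absolute value (rather than any archimedean estimate) is used, and it is also where the specific normalization $N(u_1)\in(p^{-1},1]$ is essential, since without it the value of $N(a_1u_1)$ would not pin down $|a_1|_p$ tightly enough.
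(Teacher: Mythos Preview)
Your argument is correct and follows essentially the same line as the paper's proof: identify $N(v)=N(w)$ from the strict inequality $N(a_1u_1)<N(w)$, then use $N(u_1)>p^{-1}$ together with the discreteness of $|\cdot|_p$ to pass from $|a_1|_p<1$ to $|a_1|_p\leq p^{-1}$, and conclude. One small slip worth noting: your parenthetical justification that $a_1\neq 0$ does not actually work (if $a_1=0$ then $v=w$ and $N(v)=N(w)\in[p^{-1},1]$, giving no contradiction), but this is harmless since $a_1\neq 0$ is already part of the standing hypotheses preceding the four cases.
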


\begin{proof}
Since $N(a_1u_1)<N(w)$, we have $N(v)=N(a_1u_1+w)=N(w)$. Since $p^{-1}>N(a_1u_1)=|a_1|_p\cdot N(u_1)>p^{-1}|a_1|_p$, we have $|a_1|_p<1$, i.e., $|a_1|_p\leq p^{-1}$. Thus
$$N\left(\frac{v}{a_1}\right)=\frac{N(w)}{|a_1|_p}\geq\frac{p^{-1}}{p^{-1}}=1.$$
\end{proof}

\textbf{Remark.} Compare this proposition with the previous one, from the view of point of finding out a minimum point of $N(v/a_1)$, we need not to consider this case.

\begin{proposition}
In Case 3: $N(a_1u_1)\in[p^{-1},1]$ and $N(w)\in[p^{-1},1]$, we need only to consider $|a_1|_p=1$ and $a_i\in\mathbb{Z}_p$ for $2\leq i\leq n$, hence
$$\frac{v}{a_1}=u_1+\sum_{i=2}^n\frac{a_i}{a_1}\cdot v_i\in u_1+\sum_{i=2}^n\mathbb{Z}_p\cdot v_i.$$
\end{proposition}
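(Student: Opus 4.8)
The plan is to exploit the scale-invariance of the function being minimized: since $N(cv/(ca_1)) = N(v/a_1)$ for every nonzero $c \in \mathbb{Q}_p$, the value $N(v/a_1)$ depends only on $v/a_1$, so in the search for a minimum we are free to replace $v$ by any nonzero scalar multiple, in particular by the representative whose first coordinate equals $1$. I would combine this with two elementary facts that hold throughout Case~3. First, since $v_2,\dots,v_n$ is an $N$-orthogonal basis of $W$ we have $N(w) = \max_{2 \le i \le n} |a_i|_p N(v_i)$; together with $N(w) \le 1$ and $N(v_i) > p^{-1}$ this forces $|a_i|_p < p$, hence $a_i \in \mathbb{Z}_p$ for all $i \ge 2$. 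Second, from $p^{-1} \le N(a_1u_1) = |a_1|_p N(u_1) \le 1$ and $p^{-1} < N(u_1) \le 1$ one gets $|a_1|_p \in \{p^{-1},1\}$, with $|a_1|_p = 1$ unless $N(u_1) = 1$.

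Next I would split on $|a_1|_p$. If $|a_1|_p = 1$, then $a_1$ is a unit, so $a_i/a_1 \in \mathbb{Z}_p$ and $v/a_1 = u_1 + \sum_{i=2}^n (a_i/a_1) v_i \in u_1 + \sum_{i=2}^n \mathbb{Z}_p v_i$, which is exactly the assertion. If $|a_1|_p = p^{-1}$, then $N(u_1) = 1$ by the second fact, and $v/a_1 = u_1 + w'$ with $w' = w/a_1 \in W$ and $N(w') = pN(w) \in [1,p]$, while $N(v/a_1)$ is unchanged. Here there are two possibilities. If $N(w') > 1$, then $N(v/a_1) = N(u_1 + w') = N(w') > 1 = N(u_1)$; but the vector $u_1 \in I$ realizes the value $N(u_1) = 1$ (it is a Case~1 point), so such a $v$ cannot be a minimizer of $N(\cdot/a_1)$ and may be discarded. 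If instead $N(w') = 1$, then applying the first fact with $a_i/a_1$ in place of $a_i$ (again using $N(v_i) > p^{-1}$) gives $a_i/a_1 \in \mathbb{Z}_p$, so once more $v/a_1 \in u_1 + \sum_{i=2}^n \mathbb{Z}_p v_i$. Thus, after replacing $v$ by the scalar multiple with first coordinate $1$, one may assume $|a_1|_p = 1$ and $a_i \in \mathbb{Z}_p$, and the displayed containment holds.

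The step I expect to be the real obstacle is the sub-case $|a_1|_p = p^{-1}$: a priori the rescaled vector $v/a_1$ need not satisfy $N(w/a_1) \le 1$, so it is not automatically of the form $u_1 + w'$ with $w' \in \sum_{i=2}^n \mathbb{Z}_p v_i$. The resolution is the dichotomy above — either $N(w/a_1) > 1$, and then $N(v/a_1)$ strictly exceeds the value $N(u_1) \le 1$ already attained in Case~1, so the point is irrelevant to the minimization; or $N(w/a_1) = 1$, and then the orthogonality relation forces the coefficients back into $\mathbb{Z}_p$. The only ingredients I need are the normalization $N(u_1), N(v_i) \in (p^{-1},1]$, the $N$-orthogonality of $v_2,\dots,v_n$ on $W$, and the observation — made explicit at the outset — that scaling $v$ does not change $N(v/a_1)$.
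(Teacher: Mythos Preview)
Your proof is correct and follows the same outline as the paper's: bound the $a_i$ using $N(w)\le 1$ and $N(v_i)>p^{-1}$, deduce $|a_1|_p\in\{p^{-1},1\}$, and discard $|a_1|_p=p^{-1}$ as irrelevant to the minimization.

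The only noteworthy difference is that your treatment of the sub-case $|a_1|_p=p^{-1}$ is more elaborate than necessary. You split further on whether $N(w/a_1)>1$ or $N(w/a_1)=1$, discarding the first and rescaling the second back into the search set. The paper dispatches the whole sub-case in one line: since $v\in I$ means $N(v)\ge p^{-1}$, one has $N(v/a_1)=N(v)/|a_1|_p\ge p^{-1}/p^{-1}=1$, which already matches or exceeds the Case~1 value $N(u_1)\le 1$, so no further analysis is needed. Your dichotomy on $N(w')$ is valid, but the constraint $v\in I$ alone gives the bound without ever looking at $w'$.
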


\begin{proof}
Since $N(w)=\max_{2\leq i\leq n}N(a_iv_i)\leq1$, we have $N(a_iv_i)\leq1$ for $2\leq i\leq n$. From $1\geq N(a_iv_i)=|a_i|_p\cdot N(v_i)>p^{-1}|a_i|_p$, we get $|a_i|_p<p$ if $a_i\neq0$. So we get $|a_i|_p\leq1$, i.e., $a_i\in\mathbb{Z}_p$ for $2\leq i\leq n$. Similarly, we have $|a_1|_p\leq1$. From $p^{-1}\leq N(a_1u_1)=|a_1|_p\cdot N(u_1)\leq|a_1|_p$, we get $p^{-1}\leq|a_1|_p\leq 1$. Hence $|a_1|_p=p^{-1}$ or 1. Obviously, if $N(u_1)=1$, we have $|a_1|_p=p^{-1}$ or 1; if $N(u_1)<1$, we have $|a_1|_p=1$.

If $|a_1|_p=p^{-1}$, we have
$$N\left(\frac{v}{a_1}\right)=\frac{N(v)}{p^{-1}}\geq1$$
since $N(v)\geq p^{-1}$. From the view of point of finding out a minimum point of $N(v/a_1)$, we need not to consider this case.

If $|a_1|_p=1$, we get the case stated in the proposition.
\end{proof}

\begin{proposition}
In Case 4: $N(a_1u_1)>1$ and $N(w)>1$, we have $a_i/a_1\in\mathbb{Z}_p$ for $2\leq i\leq n$, hence
$$\frac{v}{a_1}=u_1+\sum_{i=2}^n\frac{a_i}{a_1}\cdot v_i\in u_1+\sum_{i=2}^n\mathbb{Z}_p\cdot v_i.$$
\end{proposition}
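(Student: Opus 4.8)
The plan is to argue just as in the proof of Case 3, the single genuinely new feature being that the hypothesis $v\in I$ forces the two ``large'' summands $N(a_1u_1)$ and $N(w)$ to balance. First I would record that $a_1\neq0$: since $N(a_1u_1)>1>0$ we have $a_1u_1\neq0$, so $v/a_1$ is defined and equals $u_1+\sum_{i=2}^n(a_i/a_1)v_i$. The crucial step is then to show $N(a_1u_1)=N(w)$: because $v\in I$ we have $N(v)\leq1$, and if instead $N(a_1u_1)\neq N(w)$, the standard fact recalled right after the norm axioms would give $N(v)=N(a_1u_1+w)=\max\{N(a_1u_1),N(w)\}>1$, a contradiction. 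So the two summands have equal $N$-value.

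Having secured this equality, I would feed it into the induction hypothesis that $v_2,\ldots,v_n$ is an $N$-orthogonal basis of $W$: then $N(w)=\max_{2\leq i\leq n}N(a_iv_i)$, hence $N(a_iv_i)\leq N(w)=N(a_1u_1)$ for every $i$. For any $i$ with $a_i\neq0$ (the case $a_i=0$ being trivial), the normalizations $N(v_i)\in(p^{-1},1]$ and $N(u_1)\in(p^{-1},1]$ turn this into
$$p^{-1}|a_i|_p<|a_i|_p\,N(v_i)=N(a_iv_i)\leq N(a_1u_1)=|a_1|_p\,N(u_1)\leq|a_1|_p,$$
so $|a_i/a_1|_p<p$. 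Since a $p$-adic absolute value is either $0$ or an integral power of $p$, this forces $|a_i/a_1|_p\leq1$, i.e. $a_i/a_1\in\mathbb{Z}_p$ for $2\leq i\leq n$; dividing $v=a_1u_1+w$ by $a_1$ gives the displayed containment.

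I expect no real difficulty: it is the same manipulation of $p$-adic absolute values as in Case 3. The one step needing genuine care is the equality $N(a_1u_1)=N(w)$ --- one must notice that although both of these exceed $1$, the requirement $N(v)\leq1$ coming from $v\in I$ can hold only when the two summands cancel, i.e. share a common norm, and that it is exactly this coincidence that makes the $N$-orthogonal decomposition of $W$ applicable. Note that, in contrast to the superfluous Case 2 and the subcase $|a_1|_p=p^{-1}$ of Case 3, Case 4 should \emph{not} be discarded: since $N(a_1u_1)>1$ together with $N(u_1)\leq1$ forces $|a_1|_p\geq p$, one has $N(v/a_1)=N(v)/|a_1|_p\leq p^{-1}<N(u_1)$, so whenever Case 4 occurs it yields a value strictly below the constant of Case 1; like the surviving subcase of Case 3, it places the candidate minimizer in $u_1+\sum_{i=2}^n\mathbb{Z}_p v_i$, which is the set over which the minimum of $N(v/a_1)$ will ultimately be sought.
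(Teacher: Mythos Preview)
Your argument is correct and follows the paper's proof essentially line for line: deduce $N(a_1u_1)=N(w)$ from $N(v)\leq1$, use the $N$-orthogonality of $v_2,\ldots,v_n$ to get $N(a_iv_i)\leq N(a_1u_1)$, and combine with the normalizations $N(u_1),N(v_i)\in(p^{-1},1]$ to obtain $|a_i/a_1|_p<p$, hence $\leq1$. Your closing paragraph about why Case~4 genuinely contributes to the minimization is a valid observation (indeed $|a_1|_p\geq p$ forces $N(v/a_1)\leq p^{-1}<N(u_1)$), though it goes beyond what the proposition itself asserts.
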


\begin{proof}
Since $N(v)=N(a_1u_1+w)\leq1$, we must have $N(a_1u_1)=N(w)$. Since $N(w)=\max_{2\leq i\leq n}N(a_iv_i)$, we have $N(a_iv_i)\leq N(a_1u_1)$ for $2\leq i\leq n$. Thus $|a_i|_p\cdot p^{-1}<|a_i|_p\cdot N(v_i)\leq N(a_1u_1)=|a_1|_p\cdot N(u_1)\leq|a_1|_p$ if $a_i\neq0$. So we get
$$\left|\frac{a_i}{a_1}\right|_p<p.$$
I.e., $|\frac{a_i}{a_1}|_p\leq1$, hence the result of the proposition.
\end{proof}

Put the above four propositions together, we get the main auxiliary result.

\begin{lemma}\label{mainlemma}
The problem of finding out a nonzero vector $v_1\in V$ which is $N$-orthogonal to the hyperplane $W$ can be reduced to solving the closest vector problem (CVP) of the $p$-adic lattice $\mathcal{L}=\sum_{i=2}^n\mathbb{Z}_p\cdot v_i$ with the target vector $u_1\notin\mathcal{L}$. The CVP can be solved by a deterministic algorithm which terminates within finitely many steps if we can compute efficiently the norm $N(v)$ of any vector $v\in V$.
\end{lemma}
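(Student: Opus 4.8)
\medskip
\noindent\emph{Proof proposal.}\quad The plan is to treat the two assertions separately. For the \emph{reduction}, I would assemble the four preceding propositions with the fact, established in Weil's argument, that a nonzero vector which (after rescaling by a power of $p$) is a maximum point on $I$ of $|f(v)|_p/N(v)$ is $N$-orthogonal to $W=\mathrm{Ker}(f)$; since $|f(v)|_p/N(v)=|a_1|_p/N(v)$, this amounts to minimizing $N(v/a_1)$ over the $v\in I$ with $a_1\neq 0$. Case~2 and the subcase $|a_1|_p=p^{-1}$ of Case~3 give $N(v/a_1)\geq 1\geq N(u_1)$ and so are never optimal; in Cases~1 and~4 and the subcase $|a_1|_p=1$ of Case~3 one has $v/a_1\in u_1+\mathcal{L}$, hence $N(v/a_1)=N(u_1+\ell)$ for some $\ell\in\mathcal{L}$, and conversely every $u_1+\ell$ ($\ell\in\mathcal{L}$) arises as $v/a_1$ for a suitable power-of-$p$ multiple $v$ of $u_1+\ell$ lying in $I$. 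Thus the quantity to be minimized is exactly $\min_{\ell\in\mathcal{L}}N(u_1+\ell)=\mathrm{dist}(u_1,\mathcal{L})$, the minimum being attained since $N(u_1+\cdot)$ is continuous on the compact set $\mathcal{L}$; a minimizer $\ell^{\star}$ gives $v_1=u_1+\ell^{\star}$, which is nonzero (its $u_1$-coordinate is $1$) and $N$-orthogonal to $W$, because a suitable power-of-$p$ multiple of it is a maximum point of the ratio on $I$. Finding $\ell^{\star}$ is precisely the CVP for $\mathcal{L}$ with target $u_1$, and $u_1\notin\mathcal{L}$ because $\mathcal{L}\subseteq W$ while $u_1,\dots,u_n$ is a basis.

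For the \emph{algorithm}, the technical core is the ultrametric stability of $N$ along cosets: if $\ell,\ell'\in\mathcal{L}$ satisfy $\ell-\ell'\in p^{k}\mathcal{L}$ (so $N(\ell-\ell')\leq p^{-k}$, using $N(v_i)\leq 1$) and $N(u_1+\ell')>p^{-k}$, then $N(u_1+\ell)=N(u_1+\ell')$. I would then describe the deterministic procedure that, for $k=1,2,\dots$ in turn, runs over a set $R_k$ of representatives of $\mathcal{L}/p^{k}\mathcal{L}$ (the finitely many vectors $\sum_{i=2}^{n}c_iv_i$ with $c_i\in\{0,1,\dots,p^{k}-1\}$), evaluates $N(u_1+r)$ for each $r\in R_k$ (possible by hypothesis), sets $m_k=\min_{r\in R_k}N(u_1+r)$ with a minimizer $\ell_k$, and outputs $\ell_k$ if $m_k>p^{-k}$, passing to stage $k+1$ otherwise. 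When $m_k>p^{-k}$, every $\ell\in\mathcal{L}$ is congruent modulo $p^{k}\mathcal{L}$ to some $r\in R_k$ with $N(u_1+r)\geq m_k>p^{-k}$, so the stability fact forces $N(u_1+\ell)=N(u_1+r)\geq m_k$; since also $m_k\geq\mathrm{dist}(u_1,\mathcal{L})$, the output $\ell_k$ is an exact CVP solution. For termination: $\mathcal{L}$ is compact, hence closed, and $u_1\notin\mathcal{L}$, so $d^{\star}:=\mathrm{dist}(u_1,\mathcal{L})>0$; as $m_k\geq d^{\star}$ for all $k$, the test $m_k>p^{-k}$ succeeds as soon as $p^{-k}<d^{\star}$, hence after finitely many (at most $\lceil\log_p(1/d^{\star})\rceil$) stages.

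I expect the reduction to be routine bookkeeping---a repackaging of the four propositions and of Weil's compactness argument---so the real work is the termination analysis. A priori there is no effective lower bound on $d^{\star}$, so the honest conclusion is only that the procedure halts after finitely many steps, not an explicit complexity estimate; and one must check that the stopping test $m_k>p^{-k}$ is calibrated correctly, i.e.\ strong enough that passing it certifies optimality yet weak enough to be eventually passed. The delicate point is the borderline case $N(u_1+\ell')=p^{-k}$ in the stability statement---this is exactly why the strict inequality is required and why the precision is raised by a full unit after each failed stage---but once that is pinned down the remaining verifications are mechanical.
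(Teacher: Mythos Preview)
Your reduction argument is exactly the paper's: it, too, just collects the four preceding propositions to conclude that minimizing $N(v/a_1)$ over $v\in I$ with $a_1\neq 0$ amounts to minimizing $N(u_1+\ell)$ over $\ell\in\mathcal{L}$, and then invokes Lemma~\ref{1basis} to bring the resulting $u_1-w_0$ back into $I$. Where you diverge is in the second assertion. The paper does not prove the CVP algorithm at all---it simply cites Theorems~4.4--4.6 of \cite{deng2} and remarks that those results, stated there for the extension-field norm, carry over verbatim to an arbitrary $N$ once one can evaluate $N$. Your precision-lifting scheme (enumerate $\mathcal{L}/p^k\mathcal{L}$, stop when $m_k>p^{-k}$, use ultrametric stability to certify optimality, and use $d^\star>0$ for termination) is a correct, self-contained replacement for that citation; it is in fact the content of the cited theorems, so you have reconstructed what the paper outsources. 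The trade-off is that the paper's proof is one line, while yours actually establishes the algorithmic claim within the present framework and makes the role of the normalization $N(v_i)\le 1$ explicit (it is needed for the bound $N(\ell-\ell')\le p^{-k}$).
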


\begin{proof}
As we have seen that, the problem of finding out a nonzero vector $v_1\in V$ which is $N$-orthogonal to the hyperplane $W$ can be reduced to finding out a minimum point $v\in I$ with $a_1\neq0$ of the function $N(v/a_1)$. From the above four propositions, this problem can be reduced to solving the closest vector problem (CVP) of the $p$-adic lattice $\mathcal{L}=\sum_{i=2}^n\mathbb{Z}_p\cdot v_i$ with the target vector $u_1\notin\mathcal{L}$. If we find a closest vector $w_0\in\mathcal{L}$ to the target vector $u_1$, by Lemma \ref{1basis}, we can find a vector $v_1\in I$ and $a_1\in\mathbb{Q}_p$ with $a_1\neq0$ such that $v_1/a_1=u_1-w_0$. As we already know that $v_1$ is then $N$-orthogonal to the hyperplane $W$.

The another statement of the lemma follows from Theorems 4.4, 4.5 and 4.6 in \cite{deng2}. Note that the norm $N$ considered in \cite{deng2} is the $p$-adic absolute value of an extension field of $\mathbb{Q}_p$, and all results in \cite{deng2} obviously hold also for any norm $N$ if we can compute efficiently the norm $N(v)$ of any vector $v\in V$.
\end{proof}

We can now prove the main theorem.

\begin{theorem}
Let $V$ be a left vector space over $\mathbb{Q}_p$ of finite dimension $n>0$, and let $N$ be a norm on $V$. Let $u_1,u_2,\ldots,u_n$ be a $\mathbb{Q}_p$-basis of $V$. Then there is a deterministic algorithm which terminates within finitely many steps for finding out an $N$-orthogonal basis of $V$ if we can compute efficiently the norm $N(v)$ of any vector $v\in V$.
\end{theorem}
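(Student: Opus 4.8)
The plan is to proceed by induction on the dimension $n$, exactly mirroring Weil's inductive proof of Proposition \ref{orthogonal base} but now tracking the computational content at each step. For $n=1$ there is nothing to compute: the single basis vector $u_1$ already constitutes an $N$-orthogonal basis, so the algorithm returns $\{u_1\}$ immediately. For the inductive step, suppose $n>1$ and assume we have an algorithm that produces an $N$-orthogonal basis of any $(n-1)$-dimensional normed $\mathbb{Q}_p$-space (with efficiently computable norm) in finitely many steps.

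The key steps, in order, are as follows. First, form the hyperplane $W=\mathrm{span}(u_2,\ldots,u_n)$ and apply the inductive hypothesis to obtain an $N$-orthogonal basis $v_2,\ldots,v_n$ of $W$; this takes finitely many steps. Second, rescale: using Lemma \ref{1basis} (whose proof is itself a finite procedure — compute $N$ of a candidate vector, read off the exponent $m$, multiply by $p^m$), replace $u_1,v_2,\ldots,v_n$ by scalar multiples lying in $(p^{-1},1]$; this normalization is exactly the hypothesis under which Propositions on Cases 1--4 were proved. Third, invoke Lemma \ref{mainlemma}: the problem of finding a nonzero $v_1$ that is $N$-orthogonal to $W$ reduces to a CVP instance for the $p$-adic lattice $\mathcal{L}=\sum_{i=2}^n\mathbb{Z}_p v_i$ with target $u_1\notin\mathcal{L}$, and that CVP is solved by a deterministic finite algorithm (the cited Theorems 4.4--4.6 of \cite{deng2}). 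From a closest vector $w_0\in\mathcal{L}$, set $v_1$ to be a rescaling of $u_1-w_0$ into $(p^{-1},1]$. Fourth, output $v_1,v_2,\ldots,v_n$ and verify it is an $N$-orthogonal basis: we know $v_1$ is $N$-orthogonal to $W$, and $v_2,\ldots,v_n$ is $N$-orthogonal within $W$; combining these gives $N(\sum_{i=1}^n a_iv_i)=\max(N(a_1v_1),N(\sum_{i\ge 2}a_iv_i))=\max_i N(a_iv_i)$, which is precisely the defining property.

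The termination and finiteness claim is then just bookkeeping: the recursion has depth $n$, each level performs one call to the CVP solver plus finitely many norm evaluations and arithmetic operations in $\mathbb{Q}_p$, and each such operation is assumed efficient (in particular finite). So the total running time is finite. I would also remark that the output is genuinely a \emph{basis}: since each $v_i$ is a nonzero scalar multiple of vectors built from $u_1,\ldots,u_n$ in a triangular fashion (the span of $v_i,\ldots,v_n$ equals the span of $u_i,\ldots,u_n$ at every stage), linear independence is automatic.

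The main obstacle — or rather the place where all the real work has already been deposited — is the reduction to CVP and the correctness of the rescaling in Lemma \ref{mainlemma}: one must be sure that a closest vector $w_0$ to $u_1$ in $\mathcal{L}$, after rescaling $u_1-w_0$ into the window $(p^{-1},1]$, indeed yields a vector that attains the minimum of $N(v/a_1)$ over $v\in I$ with $a_1\neq 0$, hence is $N$-orthogonal to $W$ by the argument of the preceding section. Granting Lemma \ref{mainlemma} (and the finiteness half of its statement, which rests on \cite{deng2}), the theorem follows with essentially no further difficulty; the remaining effort is purely the inductive packaging described above.
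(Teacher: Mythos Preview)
Your proposal is correct and follows essentially the same approach as the paper: induction on $n$, apply the hypothesis to $W=\mathrm{span}(u_2,\ldots,u_n)$ to get $v_2,\ldots,v_n$, then invoke Lemma~\ref{mainlemma} to find $v_1$ $N$-orthogonal to $W$. You have simply made explicit some details (rescaling via Lemma~\ref{1basis}, the termination bookkeeping, and the triangular linear-independence check) that the paper leaves implicit.
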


\begin{proof}
Induction on $n$. If $n=1$, there is nothing to prove.
 Suppose $n>1$. Let $W$ be the subspace of $V$ of dimension $n-1$ spanned by $u_2,\ldots,u_n$. By induction hypothesis on $W$, there is an $N$-orthogonal basis $v_2,\ldots,v_n$ on $W$. By Lemma \ref{mainlemma}, there is a deterministic algorithm for finding out a nonzero vector $v_1\in V$ which is $N$-orthogonal to the hyperplane $W$. Then $v_1,v_2,\ldots,v_n$ is an $N$-orthogonal basis of $V$.
\end{proof}

\section{Simultaneous $p$-adic orthogonalization process}\label{simul}
Weil \cite[p.26, prop.4]{wei}  also proved the following proposition:
\begin{proposition}\cite{wei}\label{w2}
	Let $V$ be a left vector space over $\mathbb{Q}_p$ of finite dimension $n>0$, and let $N,N'$ be two norms on $V$. Then there is a decomposition $V=V_1+\cdots+V_n$ of $V$ into a direct sum of subspaces $V_i$ of dimension 1, such that
	$$N\left (\sum_{i=1}^nv_i\right)=\max_{1\leq i\leq n}N(v_i),$$
 $$N'\left (\sum_{i=1}^nv_i\right)=\max_{1\leq i\leq n}N'(v_i),\forall v_i\in V_i, i=1,\ldots,n.$$
\end{proposition}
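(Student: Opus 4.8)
The plan is to adapt Weil's inductive proof of Proposition~\ref{orthogonal base}, running the extremal-vector construction with the \emph{ratio} $N'/N$ in place of a single linear form, and to induct on $n=\dim_{\mathbb{Q}_p}V$. For $n=1$ there is nothing to prove, so assume $n>1$. The first task is to choose a good $1$-dimensional subspace $V_1$. Consider $\phi(v)=N'(v)/N(v)$ on $V\setminus\{0\}$: by property~(ii) it is invariant under multiplication by $\mathbb{Q}_p^{\times}$, and it is continuous and everywhere positive. The set $I=\{v\in V: p^{-1}\le N(v)\le 1\}$ is compact and, by Lemma~\ref{1basis} applied to $N$, meets every line through the origin; since $\phi$ is scale-invariant, $\sup_{v\neq 0}\phi=\max_{v\in I}\phi$ is attained at some $v_1\neq 0$. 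Write $c=\phi(v_1)>0$, so that $N'(v)\le c\,N(v)$ for all $v\in V$ with equality at $v_1$, and set $V_1=\mathbb{Q}_p v_1$.

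Next I would produce a complementary hyperplane $W$ that is $N'$-orthogonal to $V_1$. By Proposition~\ref{orthogonal base} applied to the single norm $N'$, fix an $N'$-orthogonal decomposition $V=\bigoplus_{i=1}^{n}\mathbb{Q}_p e_i$. Writing $v_1=\sum_i a_i e_i$, pick an index $j$ with $N'(a_j e_j)=\max_i N'(a_i e_i)=N'(v_1)$; then $a_j\neq 0$. A short ultrametric computation — whose only input is that $|b a_i|_p N'(e_i)=|b|_p N'(a_i e_i)\le |b|_p N'(v_1)$ for every $i$ and every $b\in\mathbb{Q}_p$ — shows that replacing $e_j$ by $v_1$ again yields an $N'$-orthogonal basis. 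Hence $W:=\bigoplus_{i\neq j}\mathbb{Q}_p e_i$ satisfies $V=V_1\oplus W$ and $W$ is $N'$-orthogonal to $V_1$.

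The crucial observation is that the same $W$ is then automatically $N$-orthogonal to $V_1$. Let $f\colon V\to\mathbb{Q}_p$ be the linear form with $f(v_1)=1$ and $\ker f=W$. Since $W$ is $N'$-orthogonal to $V_1$, one has $|f(v)|_p\,N'(v_1)\le N'(v)$ for all $v$ (the opposite inequality being automatic from property~(iii)); combining this with $N'(v)\le c\,N(v)$ and $N'(v_1)=c\,N(v_1)$ gives $|f(v)|_p\,N(v_1)=c^{-1}|f(v)|_p\,N'(v_1)\le c^{-1}N'(v)\le N(v)$ for all $v$, which says exactly that $W$ is $N$-orthogonal to $V_1$. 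Now apply the induction hypothesis to the pair of norms $N|_W,N'|_W$ on $W$ to obtain $W=\bigoplus_{i=2}^{n}V_i$ simultaneously $N$- and $N'$-orthogonal; since $V_1$ and $W$ are orthogonal for both norms, for $v_i\in V_i$ and $M\in\{N,N'\}$ we get $M(\sum_{i=1}^n v_i)=\max(M(v_1),M(\sum_{i\ge 2}v_i))=\max_{1\le i\le n}M(v_i)$, so $V=\bigoplus_{i=1}^{n}V_i$ is the desired decomposition.

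The main obstacle is precisely the upgrade in the third step: it is what forces the two orthogonality conditions to be realized by one and the same hyperplane, and it works only because $v_1$ was chosen to maximize $N'/N$ over all of $V$. Everything else is standard: the compactness argument for $v_1$ is the one already used in \S2, and the swap in the second step and the reassembly in the last step are routine ultrametric bookkeeping. One point deserving care is that the existence of \emph{some} $N'$-orthogonal complement of the prescribed line $V_1$ is genuinely needed and is what the swap provides; a more naive construction (such as extending $v_1$ to a $\mathbb{Z}_p$-basis of the unit ball of $N'$ and taking the dual functional) need not produce an $N'$-orthogonal complement.
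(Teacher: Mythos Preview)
Your proof is correct and follows essentially the same route as Weil's argument reviewed in \S4.1: you maximize the ratio $N'/N$, build a complement to $\mathbb{Q}_p v_1$ that is orthogonal for one norm, and use extremality of $v_1$ to upgrade it to orthogonality for the other norm, then induct---the only difference from the paper is that you swap the roles of $N$ and $N'$ (harmless by symmetry of the statement). You also spell out two steps the paper merely attributes to Weil, namely the basis-swap that produces an orthogonal complement to a \emph{prescribed} line and the inequality chain showing the second orthogonality; both are carried out correctly.
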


In other words, there is a $\mathbb{Q}_p$-basis $v_1,\ldots,v_n$ of $V$ which is $N$-orthogonal and is also $N'$-orthogonal. Weil did not give an algorithm to find out such a basis. Below, we turns Weil's proof into an algorithm for finding out such a basis. First, we review Weil's proof of Proposition \ref{w2}.

\subsection{Weil's proof of Proposition \ref{w2}}

Induction on $n$. If $n=1$, this is clear. For $n>1$, first find a nonzero vector $v_1\in V$ such that $v_1$ is a maximum point of the continuous function
$$\frac{N(v)}{N'(v)}$$
on the compact subset $I:=\{v\in V:p^{-1}\leq N(v)\leq1\}$ of $V$. Then find a hyperplane $W$ which is $N$-orthogonal to $v_1$. Weil proved that $W$ is also $N'$-orthogonal to $v_1$. By induction hypothesis on $W$, there is a $\mathbb{Q}_p$-basis $v_2,\ldots,v_n$ of $W$ which is $N$-orthogonal and is also $N'$-orthogonal. Thus the $\mathbb{Q}_p$-basis $v_1,v_2,\ldots,v_n$ of $V$ satisfies the requirement of the proposition, i.e., it is $N$-orthogonal and is also $N'$-orthogonal.

\subsection{An algorithmic version}

If we want to turn the above Weil's proof into an algorithm for simultaneous $N$- and $N'$-orthogonal basis, we have two things to do: firstly, giving an algorithm for find $v_1$, secondly, giving an algorithm for find a hyperplane $W$ which is $N$-orthogonal to $v_1$.

\begin{lemma}\label{findhyperplane}
Let $V$ be a left vector space over $\mathbb{Q}_p$ of finite dimension $n>0$, and let $N$ be a norm on $V$. Given a nonzero vector $v_1\in V$ and a $\mathbb{Q}_p$-basis $u_1,\ldots,u_n$ of $V$. There is a deterministic algorithm which terminates within finitely many steps for finding out a hyperplane $W$ which is $N$-orthogonal to $v_1$ if we can compute efficiently the norm $N(v)$ of any vector $v\in V$.
\end{lemma}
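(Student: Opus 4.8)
The strategy is to avoid constructing $W$ directly and instead to read it off from a full $N$-orthogonal basis of $V$. First I would feed the given basis $u_1,\ldots,u_n$ into the algorithm of the main theorem of Section~\ref{gram-schmidt}, obtaining an $N$-orthogonal basis $e_1,\ldots,e_n$ of $V$ together with the (invertible) change-of-basis matrix over $\mathbb{Q}_p$; under the standing assumption that $N$ is effectively computable this is a finite deterministic step. I would then solve the linear system expressing $v_1$ in this basis, $v_1=\sum_{i=1}^n a_i e_i$; since $v_1\neq 0$ not all $a_i$ vanish, and because $e_1,\ldots,e_n$ is $N$-orthogonal we have
$$N(v_1)=\max_{1\le i\le n}N(a_ie_i)=\max_{1\le i\le n}|a_i|_p\,N(e_i).$$
Computing these $n$ numbers, I pick an index $j$ realizing this maximum, so in particular $a_j\neq 0$, and I output
$$W:=\operatorname{span}_{\mathbb{Q}_p}\{\,e_i:1\le i\le n,\ i\neq j\,\}.$$

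For correctness I would first record the elementary reformulation of orthogonality: for a nonzero $v_1$ and a subspace $W$ with $V=\mathbb{Q}_pv_1\oplus W$, the pair $(\mathbb{Q}_pv_1,W)$ is $N$-orthogonal if and only if $N(v_1-w)\ge N(v_1)$ for every $w\in W$. The ``only if'' is immediate (take $a=1$ in the defining identity), and the ``if'' follows by rescaling $av_1+w$ by $a$ and invoking the ultrametric fact recalled in the introduction, namely $N(x+y)=\max(N(x),N(y))$ whenever $N(x)\neq N(y)$. With $W$ as above, $a_j\neq 0$ gives $v_1\notin W$, hence $V=\mathbb{Q}_pv_1\oplus W$ for dimension reasons; and for any $w=\sum_{i\neq j}b_ie_i\in W$, the $N$-orthogonality of the $e_i$ gives
$$N(v_1-w)=\max\Bigl(|a_j|_p\,N(e_j),\ \max_{i\neq j}|a_i-b_i|_p\,N(e_i)\Bigr)\ \ge\ |a_j|_p\,N(e_j)=N(v_1),$$
which is exactly the condition just stated. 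The whole procedure is one call to the orthogonalization algorithm followed by elementary linear algebra and norm evaluations over $\mathbb{Q}_p$, so it is deterministic and terminates in finitely many steps.

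There is no deep obstacle here; the points that require a little care are the orthogonality reformulation used above and — perhaps more instructively — the reason the obvious alternative fails. One might be tempted to process the given basis in place, solving for each $i\ge 2$ a one-dimensional CVP that replaces $u_i$ by a vector $N$-orthogonal to $v_1$, and then take $W$ to be their span. But individual $N$-orthogonality of each new basis vector to $v_1$ does \emph{not} in general force $N$-orthogonality of $v_1$ to their span, so one genuinely needs a basis that is $N$-orthogonal as a whole; this is why the reduction is to the already-established main theorem rather than to a collection of isolated CVP instances.
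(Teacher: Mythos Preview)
Your argument is correct but takes a genuinely different route from the paper. The paper proceeds by a separate induction on $n$: after swapping $v_1$ into the given basis (say in place of $u_n$), it works inside the $(n-1)$-dimensional subspace $V'=\operatorname{span}(u_2,\ldots,u_{n-1},v_1)$, uses the induction hypothesis there to obtain a hyperplane $W'\subset V'$ that is $N$-orthogonal to $v_1$, then invokes Lemma~\ref{mainlemma} once to produce a vector $u_1'$ that is $N$-orthogonal to all of $V'$, and outputs $W=\mathbb{Q}_pu_1'+W'$. You instead make a single call to the main theorem of Section~\ref{gram-schmidt} to obtain a global $N$-orthogonal basis $e_1,\ldots,e_n$, locate the index $j$ at which $|a_j|_pN(e_j)$ realizes $N(v_1)$, and take $W=\operatorname{span}\{e_i:i\neq j\}$. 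Both approaches ultimately unwind to the same chain of CVP instances; yours is tidier in that it packages all the work into one black-box orthogonalization and then extracts $W$ by the pleasant ``dominant coordinate'' observation, while the paper's version stays at the level of the primitive Lemma~\ref{mainlemma} and never needs a full $N$-orthogonal basis of $V$. One small remark: your orthogonality reformulation ($N(v_1-w)\ge N(v_1)$ for all $w\in W$ implies $N$-orthogonality) is precisely Lemma~\ref{twosum} together with a rescaling, so you could cite that instead of re-deriving it from the strict ultrametric identity.
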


\begin{proof}
Induction on $n$. If $n=1$, put $W=\{0\}$. For $n>1$, write $v_1$ as $v_1=a_1u_1+\cdots+a_nu_n$ with $a_i\in\mathbb{Q}_p$ for $1\leq i\leq n$. Since $v_1$ is nonzero, not all $a_i$ are zero. Without loss of generality, we may assume $a_n\neq0$. Then $u_1,\ldots,u_{n-1},v_1$ is a $\mathbb{Q}_p$-basis of $V$. Set $V'$ be the subspace of $V$ spanned by $u_2,\ldots,u_{n-1},v_1$. By induction on $V'$, we can find a hyperplane $W'$ in $V'$ which is $N$-orthogonal to $v_1$. By Lemma \ref{mainlemma}, we can find out a nonzero vector $u_1'\in V$ which is $N$-orthogonal to $V'$. Now, let $W$ be spanned by $u_1'$ and $W'$. Then the hyperplane $W$ is $N$-orthogonal to $v_1$.
\end{proof}

Now we can prove the following:

\begin{theorem}
Let $V$ be a left vector space over $\mathbb{Q}_p$ of finite dimension $n>0$, and let $N,N'$ be two norms on $V$. Given a $\mathbb{Q}_p$-basis $u_1,\ldots,u_n$ of $V$. There is a deterministic algorithm which terminates within finitely many steps for finding out a $\mathbb{Q}_p$-basis $v_1,\ldots,v_n$ of $V$ which is $N$-orthogonal and is also $N'$-orthogonal if we can compute efficiently the norms $N(v),N'(v)$ of any vector $v\in V$.
\end{theorem}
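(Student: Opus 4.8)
The plan is to turn Weil's inductive proof of Proposition~\ref{w2} into an algorithm by supplying effective versions of its two non-constructive steps, namely producing the vector $v_1$ and producing the hyperplane $W$. I would induct on $n$: for $n=1$ there is nothing to do (return $u_1$), so assume $n>1$ and that the statement is already available in dimension $n-1$.

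For the first step I would avoid optimizing $N(v)/N'(v)$ directly and instead observe that its maximum can be read off from any $N'$-orthogonal basis. Concretely, first run the algorithm of Section~\ref{gram-schmidt} with the single norm $N'$ on the input basis $u_1,\dots,u_n$, obtaining in finitely many steps an $N'$-orthogonal basis $w_1,\dots,w_n$ of $V$. For any nonzero $v=\sum_i c_i w_i$, choosing an index $j$ with $|c_j|_p N(w_j)=\max_i |c_i|_p N(w_i)$ (necessarily $c_j\neq 0$) gives $N(v)\le |c_j|_p N(w_j)$ by the ultrametric inequality and $N'(v)=\max_i |c_i|_p N'(w_i)\ge |c_j|_p N'(w_j)$ by $N'$-orthogonality, so that
$$\frac{N(v)}{N'(v)}\ \le\ \frac{N(w_j)}{N'(w_j)}\ \le\ \max_{1\le k\le n}\frac{N(w_k)}{N'(w_k)} .$$
Since the right-hand bound is attained at $v=w_{k}$ for an index $k$ realizing that maximum, I may simply set $v_1:=w_{k}$; this is a genuine maximizer of $N(v)/N'(v)$ over $V\setminus\{0\}$ (equivalently, by scale-invariance, over the compact set $I$), and finding it costs only $n$ evaluations each of $N$ and $N'$.

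For the second step I would apply Lemma~\ref{findhyperplane} to $v_1$ and the basis $u_1,\dots,u_n$ to produce, in finitely many steps, a hyperplane $W$ that is $N$-orthogonal to $v_1$, together with a $\mathbb{Q}_p$-basis of $W$ coming out of its construction. At this point I invoke the one genuinely non-elementary input, borrowed from Weil's proof of Proposition~\ref{w2}: because $v_1$ was chosen to maximize $N/N'$, the hyperplane $W$ is automatically $N'$-orthogonal to $v_1$ as well. I then apply the induction hypothesis to $W$ with the (still efficiently computable) restricted norms $N|_W$ and $N'|_W$ and the basis just found, obtaining a $\mathbb{Q}_p$-basis $v_2,\dots,v_n$ of $W$ that is simultaneously $N$- and $N'$-orthogonal. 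Finally $V=\mathbb{Q}_p v_1\oplus W$, and combining $N$-orthogonality of $v_1$ to $W$ with $N$-orthogonality of $v_2,\dots,v_n$ inside $W$ (and likewise for $N'$) shows that $v_1,v_2,\dots,v_n$ is the desired basis. The recursion has depth $n$ with finitely many steps per level, so the algorithm terminates.

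The main obstacle is conceptual rather than computational: it is Weil's fact that the particular maximizer $v_1$ forces \emph{every} $N$-orthogonal complementary hyperplane to be $N'$-orthogonal to $v_1$ too. Granting that, the only real work is the reduction above, which turns ``find $v_1$'' into one $N'$-orthogonalization plus a handful of norm evaluations, together with the already-established Lemma~\ref{findhyperplane}; everything else is bookkeeping of direct-sum decompositions.
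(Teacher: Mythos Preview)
Your proposal is correct and follows the same global induction as the paper, including the use of Lemma~\ref{findhyperplane} for the hyperplane step and the appeal to Weil's fact that any $N$-orthogonal complement of a maximizer of $N/N'$ is automatically $N'$-orthogonal. The one genuine difference is in how you locate the maximizer $v_1$. The paper normalizes so that $N(u_i)\in(p^{-1},1]$, observes that every $v\in I$ has $v/a_j\in u_j+\mathcal{L}_j$ for some $j$ (with $\mathcal{L}_j$ the $\mathbb{Z}_p$-span of the remaining $u_i$), invokes the finiteness of the value sets $N(u_j+\mathcal{L}_j)$ and $N'(u_j+\mathcal{L}_j)$ from \cite{deng2}, and then enumerates over $j=1,\dots,n$ to find the maximum of $N/N'$. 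You instead first compute an $N'$-orthogonal basis $w_1,\dots,w_n$ via the algorithm of Section~\ref{gram-schmidt} and then prove the clean inequality $N(v)/N'(v)\le\max_k N(w_k)/N'(w_k)$, so that some $w_k$ already realizes the maximum. Your argument for this inequality is sound: with $j$ chosen so that $|c_j|_pN(w_j)$ is maximal, the ultrametric bound gives $N(v)\le|c_j|_pN(w_j)$ while $N'$-orthogonality gives $N'(v)\ge|c_j|_pN'(w_j)$. This route is conceptually tidier and trades the paper's coset-value enumeration for one call to the single-norm orthogonalization plus $n$ norm evaluations; the paper's route, on the other hand, avoids the preliminary $N'$-orthogonalization and stays closer to a direct search over the finitely many attainable ratios.
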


\begin{proof}
Induction on $n$. If $n=1$, this is clear. For $n>1$, we first find a nonzero vector $v_1\in V$ such that $v_1$ is a maximum point of the continuous function
$$\frac{N(v)}{N'(v)}$$
on the compact subset $I:=\{v\in V:p^{-1}\leq N(v)\leq1\}$ of $V$ by the following method. By Lemma \ref{1basis}, we can assume $N(u_i)\in(p^{-1},1]$ for $1\leq i\leq n$. For $v\in I$, write $v$ as $v=a_1u_1+\cdots+a_nu_n$ with $a_i\in\mathbb{Q}_p$ for $1\leq i\leq n$. Since $v$ is nonzero, not all $a_i$ are zero. Set $|a_j|_p=\max_{1\leq i\leq n}|a_i|_p$. Then $a_j\neq0$. So $v/a_j$ is a sum of $u_j$ and $\mathbb{Z}_p$-linear combination of other $n-1$ basis vectors. Let $\mathcal{L}$ be the lattice spanned by $\{u_i:1\leq i\leq n,i\neq j\}$. We have $v/a_j=u_j+$a lattice vector of $\mathcal{L}$. Since $u_j\notin\mathcal{L}$, we know from Section 4 in \cite{deng2} that both $N(u_j+\mathcal{L})$ and $N'(u_j+\mathcal{L})$ take only finitely many positive values. Since
$$\frac{N(v)}{N'(v)}=\frac{N(v/a_j)}{N'(v/a_j)},$$
so $N(v)/N'(v)$ takes only finitely many positive values. And from Section 4 in \cite{deng2} we know that there is a deterministic algorithm which terminates within finitely many steps to compute these positive values and corresponding lattice vectors if we can compute efficiently the norms $N(v),N'(v)$ of any vector $v\in V$. Letting $j=1,2,\ldots,n$, we can get a nonzero vector $v_1\in V$ such that $v_1$ is a maximum point of the continuous function
$$\frac{N(v)}{N'(v)}$$
on the compact subset $I:=\{v\in V:p^{-1}\leq N(v)\leq1\}$ of $V$.

By Lemma \ref{findhyperplane}, there is a deterministic algorithm to find out a hyperplane $W$ which is $N$-orthogonal to $v_1$. Using induction hypothesis on $W$, we can find out a $\mathbb{Q}_p$-basis $v_2,\ldots,v_n$ of $W$ which is $N$-orthogonal and is also $N'$-orthogonal. Now the $\mathbb{Q}_p$-basis $v_1,v_2,\ldots,v_n$ of $V$ which is $N$-orthogonal and is also $N'$-orthogonal. We are done.
\end{proof}

\section{Orthogonalization of $p$-adic lattices}\label{laorth}
We have dealt with the orthogonalization process of bases in vector spaces over $\mathbb{Q}_p$, in this section, we consider the orthogonalization of bases in $p$-adic lattices. We first recall the definition of a $p$-adic lattice.

\begin{definition}
Let $V$ be a left vector space over $\mathbb{Q}_p$ of finite dimension $n>0$, and let $N$ be a norm on $V$. Let $v_1,\ldots,v_m(1\leq m\leq n)$ be $\mathbb{Q}_p$-linearly independent vectors of $V$. The subset
$$\mathcal{L}=\mathcal{L}(v_1,\ldots,v_m):=\left\{\sum_{i=1}^m a_i\cdot v_i: a_i\in\mathbb{Z}_p,i=1,\ldots,m\right\}$$
of $V$ is called a $p$-adic lattice in $V$. $m$ is the rank of the lattice and $v_1,\ldots,v_m$ is a basis of the lattice.
\end{definition}

Lattices are compact subsets of $V$. The following proposition can be found in \cite[p.72, prop.]{rob}.

\begin{proposition}
Let $\Omega\subset V$ be a compact subset. (a) For every $a\in V-\Omega$, the set of norms $\{N(x-a):x\in\Omega\}$ is finite. (b) For every $a\in\Omega$, the set of norms $\{N(x-a):x\in\Omega-\{a\}\}$ is discrete in $\mathbb{R}_{>0}$.
\end{proposition}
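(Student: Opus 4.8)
The plan is to establish (a) directly from compactness and the ultrametric inequality, and then to deduce (b) from (a) by passing to superlevel sets of the map $x\mapsto N(x-a)$. Throughout I use that the topology on the finite-dimensional space $V$ is the one induced by $N$ (equivalently, by any norm), so that $x_k\to x$ in $V$ is the same as $N(x_k-x)\to 0$, and that $x\mapsto N(x-a)$ is continuous on $V$; indeed it is $1$-Lipschitz with respect to $N$, a routine consequence of the ultrametric inequality.

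For part (a), assume toward a contradiction that $S:=\{N(x-a):x\in\Omega\}$ is infinite, and pick a sequence $(x_k)$ in $\Omega$ such that the values $N(x_k-a)$ are pairwise distinct. By compactness of $\Omega$, which is metrizable and hence sequentially compact, I may pass to a subsequence and assume $x_k\to x_0$ for some $x_0\in\Omega$. Since $a\notin\Omega$ we have $c:=N(x_0-a)>0$, while $N(x_k-x_0)\to 0$; hence $N(x_k-x_0)<c$ for all sufficiently large $k$. For such $k$ the vectors $x_k-x_0$ and $x_0-a$ have distinct norms, so, by the fact recalled in the introduction, $N(x_k-a)=N\bigl((x_k-x_0)+(x_0-a)\bigr)=\max\bigl(N(x_k-x_0),\,c\bigr)=c$. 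Thus $N(x_k-a)=c$ for all large $k$, contradicting the pairwise distinctness of these values. Therefore $S$ is finite.

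For part (b), fix $a\in\Omega$ and an arbitrary $\varepsilon>0$, and set $\Omega_\varepsilon:=\{x\in\Omega:N(x-a)\geq\varepsilon\}$. This is the preimage in $\Omega$ of $[\varepsilon,\infty)$ under the continuous map $x\mapsto N(x-a)$, hence closed in $\Omega$ and therefore compact; and $a\notin\Omega_\varepsilon$ because $N(a-a)=0<\varepsilon$. Applying part (a) to the compact set $\Omega_\varepsilon$ and the point $a$, the set $\{N(x-a):x\in\Omega_\varepsilon\}$ is finite. But this set equals $\{N(x-a):x\in\Omega-\{a\}\}\cap[\varepsilon,\infty)$, so the latter is finite for every $\varepsilon>0$, which is precisely the assertion that $\{N(x-a):x\in\Omega-\{a\}\}$ is discrete in $\mathbb{R}_{>0}$.

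The argument is short and I do not expect a serious obstacle; the two points needing a little care are that the hypothesis $a\notin\Omega$ is genuinely used in (a) to guarantee $c=N(x_0-a)>0$ (without it the conclusion fails, e.g.\ $V=\mathbb{Q}_p$, $N=|\cdot|_p$, $\Omega=\mathbb{Z}_p$, $a=0$), and that in (b) one should observe that discreteness in $\mathbb{R}_{>0}$ is equivalent to finiteness of every tail $\{\,\cdot\,\}\cap[\varepsilon,\infty)$, which is exactly what makes the reduction to (a) sufficient. One could instead derive (a) from Proposition \ref{orthogonal base}: an $N$-orthogonal basis $\alpha_1,\ldots,\alpha_n$ forces $N(v)\in\bigcup_{i=1}^n p^{\mathbb{Z}}N(\alpha_i)$ for every $v\neq 0$, a set meeting each $[\varepsilon,M]$ in finitely many points, while continuity and compactness confine $\{N(x-a):x\in\Omega\}$ to some $[m,M]$ with $0<m\leq M<\infty$; but the self-contained argument above is cleaner.
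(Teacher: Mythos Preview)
Your proof is correct. The paper does not supply its own argument for this proposition; it simply cites Robert \cite[p.72]{rob}, so there is nothing in the paper to compare against. Your compactness-plus-ultrametric argument for (a) and the reduction of (b) to (a) via the compact superlevel sets $\Omega_\varepsilon=\{x\in\Omega:N(x-a)\geq\varepsilon\}$ are clean and standard, and the alternative route via an $N$-orthogonal basis that you sketch at the end is also valid.

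One small remark on exposition: in your closing paragraph you say that discreteness in $\mathbb{R}_{>0}$ is \emph{equivalent} to finiteness of every tail $\{\cdot\}\cap[\varepsilon,\infty)$. In general only one implication holds (the positive integers are discrete in $\mathbb{R}_{>0}$ but have infinite tails). This does not affect your proof, since you establish the stronger finite-tail property and that certainly implies discreteness; moreover the set in question is bounded, being the continuous image of a compact set, so the distinction is moot in context.
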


Due to this proposition, we can define the longest vector problem (LVP) and the closest vector problem (CVP) in $p$-adic lattices in this general setting as in \cite{deng2}, and all results in \cite{deng2} obviously hold also for any norm $N$ if we can compute efficiently the norm $N(v)$ of any vector $v\in V$.

\begin{definition}
Let $V$ be a left vector space over $\mathbb{Q}_p$ of finite dimension $n>0$, and let $N$ be a norm on $V$. If $\alpha_1,\ldots,\alpha_m$ is an $N$-orthogonal basis of the vector space spanned by a lattice $\mathcal{L}=\sum_{i=1}^{m}\mathbb{Z}_p\alpha_i$, then we call $\alpha_1,\ldots,\alpha_m$ an $N$-orthogonal basis of the lattice $\mathcal{L}$.
\end{definition}

Weil \cite[p.29, th.1]{wei} proved the following result.

\begin{proposition}\cite{wei}
Let $V$ be a left vector space over $\mathbb{Q}_p$ of finite dimension $n>0$, and let $N$ be a norm on $V$. Let $\mathcal{L}=\sum_{i=1}^{m}\mathbb{Z}_p\alpha_i$ be a lattice in $V$. Then there is a norm $N'$ on $V$ and an $N'$-orthogonal basis $\beta_1,\ldots,\beta_m$ such that $\mathcal{L}=\sum_{i=1}^{m}\mathbb{Z}_p\beta_i$.
\end{proposition}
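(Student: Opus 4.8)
The plan is to disregard the norm $N$ in the hypothesis — it figures only because the paper's definition of a $p$-adic lattice presupposes a norm on $V$ — and to build $N'$ directly from the given lattice basis, in such a way that one may take $\beta_i=\alpha_i$. First I would extend $\alpha_1,\dots,\alpha_m$ to a $\mathbb{Q}_p$-basis $\alpha_1,\dots,\alpha_n$ of $V$; this is possible because $\alpha_1,\dots,\alpha_m$ are $\mathbb{Q}_p$-linearly independent by the definition of a lattice. Then I would let $N'$ be the coordinate maximum norm attached to this basis,
$$N'\!\left(\sum_{i=1}^n a_i\alpha_i\right):=\max_{1\le i\le n}|a_i|_p.$$
That $N'$ satisfies (i)--(iii) of a norm is routine: non-degeneracy because coordinates are unique and $|\cdot|_p$ is non-degenerate; homogeneity because $\max_i|xa_i|_p=|x|_p\max_i|a_i|_p$; and the ultrametric inequality term by term from that of $|\cdot|_p$. (Invariantly, on $W:=\sum_{i=1}^m\mathbb{Q}_p\alpha_i=\operatorname{span}_{\mathbb{Q}_p}\mathcal{L}$ this $N'$ is just the gauge $N'(v)=\min\{|a|_p:a\in\mathbb{Q}_p,\ v\in a\mathcal{L}\}$ of $\mathcal{L}$, extended arbitrarily to $V$; in particular $\mathcal{L}=\{v\in W:N'(v)\le 1\}$.)

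Next I would check that $\alpha_1,\dots,\alpha_m$ is an $N'$-orthogonal basis of $W$. With the decomposition $W=W_1+\cdots+W_m$, $W_i:=\mathbb{Q}_p\alpha_i$, a vector $\sum_{i=1}^m v_i$ with $v_i=b_i\alpha_i\in W_i$ has coordinates $(b_1,\dots,b_m,0,\dots,0)$ in the chosen basis of $V$, so $N'(\sum_{i=1}^m v_i)=\max_{1\le i\le m}|b_i|_p$; since $N'(\alpha_i)=1$ for each $i$, this equals $\max_{1\le i\le m}N'(v_i)$. Hence $\alpha_1,\dots,\alpha_m$ is an $N'$-orthogonal basis of the vector space $W$ spanned by $\mathcal{L}$, i.e., an $N'$-orthogonal basis of the lattice $\mathcal{L}$ in the sense of the definition above, and trivially $\mathcal{L}=\sum_{i=1}^m\mathbb{Z}_p\alpha_i$. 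Taking $\beta_i=\alpha_i$ for $1\le i\le m$ finishes the proof.

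I do not expect a genuine obstacle here: the whole content is the realization that the hypothesis norm $N$ is not needed and may be replaced by the lattice's own gauge norm, for which $\mathcal{L}$ is exactly the unit ball and every $\mathbb{Z}_p$-basis of $\mathcal{L}$ — in particular $(\alpha_i)$ — is automatically $N'$-orthogonal; after that only the two short verifications above remain. If one preferred to mirror the inductive style of the earlier sections and extract the orthogonal basis from Proposition \ref{orthogonal base} applied to $(W,N')$ rather than writing it down, the one point to note is that $N'$ takes values in $p^{\mathbb{Z}}\cup\{0\}$, so the basis vectors produced there, once rescaled into $(p^{-1},1]$ by Lemma \ref{1basis}, have $N'$-norm exactly $1$ and hence $\mathbb{Z}_p$-span equal to $\{v\in W:N'(v)\le 1\}=\mathcal{L}$; the direct construction makes even this unnecessary.
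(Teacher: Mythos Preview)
Your proof is correct and matches the approach the paper indicates: the paper does not supply its own argument but cites Weil and remarks that $N'$ may be taken to be the gauge norm $N'(v)=\inf_{x\in\mathbb{Q}_p^{\times},\,xv\in\mathcal{L}}|x|_p^{-1}$, which on $W=\operatorname{span}_{\mathbb{Q}_p}\mathcal{L}$ coincides with your coordinate maximum norm, and for which the given $\alpha_i$ are already $N'$-orthogonal. Your extension of $N'$ to all of $V$ by completing to a basis is in fact more careful than the paper's displayed formula, which would be $+\infty$ off $W$ when $m<n$.
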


In general, the norm $N'$ is different from the norm $N$. In \cite[p.28, prop.6]{wei}, Weil proved that $N'$ can be taken by
$$N'(v)=\inf_{x\in\mathbb{Q}_p^{\times},xv\in\mathcal{L}}|x|_p^{-1},\mbox{ for }v\in V.$$
So, a natural question is whether there is an $N$-orthogonal basis of the lattice $\mathcal{L}=\sum_{i=1}^{m}\mathbb{Z}_p\alpha_i$ given a basis $\alpha_1,\ldots,\alpha_m$ of $\mathcal{L}$. When the rank $m$ is 1, the answer is clear. The main result of this section is that the answer is affirmative if the rank $m$ is two, and we give a deterministic algorithm to find out an $N$-orthogonal basis given an arbitrary basis of a rank-2 lattice.

\begin{lemma}\label{twosum}
Let $V$ be a left vector space over $\mathbb{Q}_p$, and let $N$ be a norm on $V$. Let $v,w\in V$. Then we have $N(v+w)=\max(N(v),N(w))$ if and only if $N(v+w)\geq N(v)$.
\end{lemma}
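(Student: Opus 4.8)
The plan is to dispose of the forward implication immediately and then settle the converse by a short case analysis. If $N(v+w)=\max(N(v),N(w))$, then trivially $N(v+w)=\max(N(v),N(w))\ge N(v)$, so that direction requires no work.

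For the converse, I would assume $N(v+w)\ge N(v)$ and invoke two facts that are already on the table: the ultrametric inequality (iii), which gives $N(v+w)\le\max(N(v),N(w))$ unconditionally; and the observation recorded immediately after the norm axioms, namely that $N(a)\ne N(b)$ forces $N(a+b)=\max(N(a),N(b))$. Then I would split on the relation between $N(v)$ and $N(w)$. If $N(w)\le N(v)$, then $\max(N(v),N(w))=N(v)$, and combining the hypothesis $N(v+w)\ge N(v)$ with $N(v+w)\le\max(N(v),N(w))=N(v)$ forces $N(v+w)=N(v)=\max(N(v),N(w))$. If instead $N(w)>N(v)$, then $N(v)\ne N(w)$, so the quoted observation yields $N(v+w)=\max(N(v),N(w))$ at once; in this branch the hypothesis is not even needed.

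There is no genuine obstacle here: the statement is essentially a repackaging of the ultrametric inequality together with the ``strict triangle equality'' already noted in the introduction. The only point to keep in mind is that the hypothesis $N(v+w)\ge N(v)$ is asymmetric in $v$ and $w$, so it should not be expected to do work in both branches of the case split—it is in fact used only when $N(w)\le N(v)$—and one could equally well have phrased the lemma with $N(w)$ in place of $N(v)$.
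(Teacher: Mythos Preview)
Your proof is correct. The forward direction is immediate, and your case split for the converse is sound: when $N(w)\le N(v)$ you squeeze $N(v+w)$ between $N(v)$ and $\max(N(v),N(w))=N(v)$, and when $N(w)>N(v)$ the strict triangle equality (already recorded after the norm axioms) finishes things.

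The paper's argument, however, avoids any case analysis. From the hypothesis $N(v+w)\ge N(v)$ it applies the ultrametric inequality directly to $w=(v+w)+(-v)$ to obtain $N(w)\le\max(N(v+w),N(v))=N(v+w)$; this together with the hypothesis gives $N(v+w)\ge\max(N(v),N(w))$, and the reverse inequality is just the ultrametric inequality again. Your route leans on the auxiliary fact about unequal norms, whereas the paper uses only the raw ultrametric inequality and the single trick of writing $w$ as a difference. Both are short, but the paper's version is slightly more economical and shows that the hypothesis is actually doing work in \emph{both} regimes, not just one.
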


\begin{proof}
The 'only if' part of the lemma is clear. We prove 'if' part as follows. Suppose $N(v+w)\geq N(v)$. Hence $N(w)=N(v+w-v)\leq\max(N(v+w),N(v))=N(v+w)$. So $N(v+w)\geq\max(N(v),N(w))$. By the definition of a norm, we have $N(v+w)\leq\max(N(v),N(w))$. Therefore $N(v+w)=\max(N(v),N(w))$.
\end{proof}

\begin{corollary}\label{pointtolattice}
Let $V$ be a left vector space over $\mathbb{Q}_p$, and let $N$ be a norm on $V$. Let $\alpha_1,\ldots,\alpha_n(n>1)$ be $\mathbb{Q}_p$-linearly independent vectors of $V$. Set $\mathcal{L}=\mathcal{L}(\alpha_2,\ldots,\alpha_n)$. Then we have:
$$N(\alpha_1+w)=\max(N(\alpha_1),N(w))\mbox{ for all }w\in\mathcal{L}$$
if and only if
$$N(\alpha_1)=\min\{N(\alpha_1+w):w\in\mathcal{L}\}.$$
\end{corollary}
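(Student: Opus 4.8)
The plan is to deduce this directly from Lemma \ref{twosum}, applied with $v=\alpha_1$ and an arbitrary $w\in\mathcal L$. First I would dispose of the "only if" direction: if $N(\alpha_1+w)=\max(N(\alpha_1),N(w))$ holds for every $w\in\mathcal L$, then in particular $N(\alpha_1+w)\geq N(\alpha_1)$ for all such $w$, and since $w=0$ gives equality $N(\alpha_1+0)=N(\alpha_1)$, the minimum of $\{N(\alpha_1+w):w\in\mathcal L\}$ is attained and equals $N(\alpha_1)$.

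For the "if" direction, assume $N(\alpha_1)=\min\{N(\alpha_1+w):w\in\mathcal L\}$. Fix $w\in\mathcal L$. Then $N(\alpha_1+w)\geq N(\alpha_1)$ by the minimality hypothesis. Now apply Lemma \ref{twosum} with the pair $(\alpha_1,w)$: the hypothesis $N(\alpha_1+w)\geq N(\alpha_1)$ is exactly what is needed to conclude $N(\alpha_1+w)=\max(N(\alpha_1),N(w))$. Since $w\in\mathcal L$ was arbitrary, this gives the desired statement. Here one should note that the minimum is indeed attained: $\mathcal L$ is a compact subset of $V$ and $\alpha_1\notin\mathcal L$ by linear independence, so by the proposition on compact subsets the set of norms $\{N(\alpha_1+w):w\in\mathcal L\}$ is finite and nonempty, hence has a least element; this also makes the phrase "$N(\alpha_1)=\min\{\cdots\}$" meaningful.

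I do not anticipate any real obstacle: the content is entirely carried by Lemma \ref{twosum}, and the corollary is essentially a reformulation of it in which the single vector $w$ is replaced by "every lattice vector $w$" and the inequality $N(\alpha_1+w)\geq N(\alpha_1)$ is repackaged as the statement that $N(\alpha_1)$ is the minimum over the lattice. The only point requiring a word of care is observing that $w=0$ lies in $\mathcal L$, which is what pins the minimum to the value $N(\alpha_1)$ rather than something smaller, and the finiteness remark ensuring the minimum exists.
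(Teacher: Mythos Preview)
Your proposal is correct and follows exactly the paper's approach: the paper's proof is the single line ``This follows immediately from Lemma \ref{twosum},'' and what you have written is precisely the routine unpacking of that deduction in both directions. Your extra remark on compactness ensuring the minimum exists is a welcome clarification but not strictly needed for the logical content of the corollary.
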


\begin{proof}
This follows immediately from Lemma \ref{twosum}.
\end{proof}

\textbf{Remark.} By solving a CVP-instance, for the lattice $\mathcal{L}=\mathcal{L}(\alpha_2,\ldots,\alpha_n)$ and the target vector $\alpha_1$, we can find $w_0\in\mathcal{L}(\alpha_2,\ldots,\alpha_n)$ such that
$$N(\alpha_1+w_0)=\min\{N(\alpha_1+w):w\in\mathcal{L}\}.$$
There is a deterministic algorithm to find such a $w_0$ (see \cite{deng2}) if we can compute efficiently the norm $N(v)$ of any vector $v\in V$.

\begin{lemma}\label{critoorth}
Let $V$ be a left vector space over $\mathbb{Q}_p$ of finite dimension $n>1$, and let $N$ be a norm on $V$. Let $\alpha_1,\ldots,\alpha_n$ be $\mathbb{Q}_p$-linearly independent vectors of $V$. Then $\alpha_1,\ldots,\alpha_n$ is an $N$-orthogonal basis of $V$ if and only if it holds that
$$N\left(\sum_{i=1}^na_i\alpha_i\right)=\max_{1\leq i\leq n}N(a_i\alpha_i)$$
where one of the $a_1,\ldots,a_n$ is 1 and the remaining $n-1$ many ones are in $\mathbb{Z}_p$.
\end{lemma}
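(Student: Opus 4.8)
The plan is to handle the two implications separately, the ``only if'' being a formality and the ``if'' resting on a single scaling trick.

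For the ``only if'' direction I would simply invoke the definition of an $N$-orthogonal basis: it asserts $N\bigl(\sum_i v_i\bigr)=\max_i N(v_i)$ for \emph{all} $v_i\in\mathbb{Q}_p\alpha_i$, i.e. $N\bigl(\sum_i a_i\alpha_i\bigr)=\max_i N(a_i\alpha_i)$ for every $(a_1,\dots,a_n)\in\mathbb{Q}_p^n$, and the displayed family (one coordinate equal to $1$, the rest in $\mathbb{Z}_p$) is a special case. I would also note in passing that $\alpha_1,\dots,\alpha_n$, being $n$ linearly independent vectors of the $n$-dimensional space $V$, automatically span $V$ and give a direct sum $V=\bigoplus_i\mathbb{Q}_p\alpha_i$, so the only substantive condition in ``being an $N$-orthogonal basis'' is the norm identity.

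For the converse the key step is to reduce an arbitrary coefficient vector to the special shape. Given $v=\sum_{i=1}^n a_i\alpha_i$ with not all $a_i$ zero, I would choose an index $j$ with $|a_j|_p=\max_{1\le i\le n}|a_i|_p$; then $a_j\neq 0$, all ratios $a_i/a_j$ lie in $\mathbb{Z}_p$, and the $j$-th ratio equals $1$. Thus the hypothesis applies to $v/a_j=\sum_i (a_i/a_j)\alpha_i$, yielding $N(v/a_j)=\max_i N\bigl((a_i/a_j)\alpha_i\bigr)$. Multiplying through by $|a_j|_p$ and using the homogeneity axiom $N(xu)=|x|_p N(u)$ together with the fact that scaling by a positive constant commutes with $\max$, I would recover $N(v)=\max_i |a_i|_p N(\alpha_i)=\max_i N(a_i\alpha_i)$; the case $v=0$ is trivial. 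This is precisely the defining norm identity, so $\alpha_1,\dots,\alpha_n$ is an $N$-orthogonal basis.

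I do not expect a genuine obstacle here; the one point deserving attention is that the hypothesis is a \emph{universal} statement over which coordinate is set to $1$, and it is exactly this uniformity that lets the index $j$ attaining the largest $p$-adic absolute value play the role of the distinguished coordinate in the normalization step. An alternative would be to apply Corollary \ref{pointtolattice} with $\alpha_j$ as target vector and the span of the remaining $\alpha_i$ as lattice, but the direct scaling argument above is shorter and self-contained.
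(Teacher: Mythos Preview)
Your proposal is correct and follows essentially the same approach as the paper: the ``only if'' direction is immediate, and for the ``if'' direction both you and the paper pick an index $j$ maximizing $|a_j|_p$, divide through by $a_j$ to reduce to the hypothesis, and then multiply back using homogeneity of $N$. The additional remarks you make (that the $\alpha_i$ automatically span $V$, and the alternative via Corollary~\ref{pointtolattice}) are not in the paper but do no harm.
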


\begin{proof}
The 'only if' part of the lemma is clear. We prove 'if' part as follows. Suppose the condition in the lemma is satisfied. We want to prove that $\alpha_1,\ldots,\alpha_n$ is an $N$-orthogonal basis of $V$, i.e., we have
$$N\left(\sum_{i=1}^na_i\alpha_i\right)=\max_{1\leq i\leq n}N(a_i\alpha_i)$$
for all $a_i\in\mathbb{Q}_p(1\leq i\leq n)$. If $a_1,\ldots,a_n$ are all zero, this is clear. Suppose $a_1,\ldots,a_n$ are not all zero. There is an index $j\in\{1,\ldots,n\}$ such that $|a_j|_p=\max_{1\leq i\leq n}|a_i|_p$. Then $a_j\neq0$ and $a_i/a_j\in\mathbb{Z}_p$ for all $1\leq i\leq n$. Thus
$$N\left(\sum_{i=1}^na_i\alpha_i\right)=|a_j|_p\cdot N\left(\sum_{i=1}^n\frac{a_i}{a_j}\alpha_i\right)=|a_j|_p\cdot\max_{1\leq i\leq n}N\left(\frac{a_i}{a_j}\alpha_i\right)=\max_{1\leq i\leq n}N(a_i\alpha_i)$$
using the condition in the lemma. We are done.
\end{proof}

Now we can prove the main result of this section.

\begin{theorem}
Let $V$ be a left vector space over $\mathbb{Q}_p$, and let $N$ be a norm on $V$. Let $\alpha,\beta$ be two $\mathbb{Q}_p$-linearly independent vectors of $V$. Let $\mathcal{L}=\mathcal{L}(\alpha,\beta)$ be a lattice of rank 2 in $V$. Then there is a deterministic algorithm to find out an $N$-orthogonal basis of the lattice $\mathcal{L}$ if we can compute efficiently the norm $N(v)$ of any vector $v\in V$.
\end{theorem}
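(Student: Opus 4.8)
The plan is to produce, using at most two CVP computations, a $\mathbb{Z}_p$-basis $\{\gamma_1,\gamma_2\}$ of $\mathcal{L}$ in which $\gamma_1$ has minimal norm in its coset $\gamma_1+\mathbb{Z}_p\gamma_2$ and $\gamma_2$ has minimal norm in its coset $\gamma_2+\mathbb{Z}_p\gamma_1$. By Corollary \ref{pointtolattice}, these two conditions are equivalent to $N(\gamma_1+t\gamma_2)=\max(N(\gamma_1),N(t\gamma_2))$ and $N(s\gamma_1+\gamma_2)=\max(N(s\gamma_1),N(\gamma_2))$ for all $s,t\in\mathbb{Z}_p$, and Lemma \ref{critoorth}, applied in the two-dimensional space $V'=\mathbb{Q}_p\alpha+\mathbb{Q}_p\beta$ spanned by $\mathcal{L}$, then upgrades these to $N(a\gamma_1+b\gamma_2)=\max(N(a\gamma_1),N(b\gamma_2))$ for all $a,b\in\mathbb{Q}_p$; since moreover $\mathcal{L}=\mathbb{Z}_p\gamma_1+\mathbb{Z}_p\gamma_2$, this is precisely the assertion that $\{\gamma_1,\gamma_2\}$ is an $N$-orthogonal basis of $\mathcal{L}$. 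Each CVP instance below is for a rank-$1$ lattice and is solved by the deterministic, finitely terminating algorithm of \cite{deng2} recalled in the Remark after Corollary \ref{pointtolattice}, so the whole procedure terminates.

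First I would solve the CVP for $\mathbb{Z}_p\beta$ with target $\alpha$, obtaining $c\in\mathbb{Z}_p$ such that $\alpha':=\alpha-c\beta$ satisfies $N(\alpha')=\min\{N(\alpha'+w):w\in\mathbb{Z}_p\beta\}$. Since $c\in\mathbb{Z}_p$, the pair $\{\alpha',\beta\}$ is again a $\mathbb{Z}_p$-basis of $\mathcal{L}$, and by Corollary \ref{pointtolattice} we have $N(\alpha'+t\beta)=\max(N(\alpha'),N(t\beta))$ for all $t\in\mathbb{Z}_p$. If $N(\alpha')\le N(\beta)$, I claim $\{\alpha',\beta\}$ is already an $N$-orthogonal basis: for $w\in\mathbb{Z}_p\alpha'$ one has $N(w)\le N(\alpha')\le N(\beta)$, and in the only delicate subcase $N(w)=N(\beta)$---which forces $N(\alpha')=N(\beta)$ and $w=u\alpha'$ with $u\in\mathbb{Z}_p^{\times}$---rescaling by $u$ reduces the computation of $N(\beta+w)$ to the identity just established and yields $N(\beta+w)=N(\beta)$; hence $N(\beta+w)=\max(N(\beta),N(w))$ for all $w\in\mathbb{Z}_p\alpha'$, so $\beta$ has minimal norm in its coset modulo $\mathbb{Z}_p\alpha'$, and Corollary \ref{pointtolattice} together with Lemma \ref{critoorth} finishes this case.

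If instead $N(\alpha')>N(\beta)$, I would run a second CVP, this time for $\mathbb{Z}_p\alpha'$ with target $\beta$, obtaining $c'\in\mathbb{Z}_p$ and $\beta':=\beta-c'\alpha'$ with $N(\beta')=\min\{N(\beta'+w):w\in\mathbb{Z}_p\alpha'\}$; taking $w=0$ in this minimum gives $N(\beta')\le N(\beta)<N(\alpha')$, and $\{\alpha',\beta'\}$ is still a $\mathbb{Z}_p$-basis of $\mathcal{L}$. By Corollary \ref{pointtolattice}, $N(s\alpha'+\beta')=\max(N(s\alpha'),N(\beta'))$ for all $s\in\mathbb{Z}_p$, while $N(t\beta')\le N(\beta')<N(\alpha')$ for every $t\in\mathbb{Z}_p$ forces $N(\alpha'+t\beta')=N(\alpha')=\max(N(\alpha'),N(t\beta'))$, since a sum of two vectors of distinct norms has norm equal to the larger. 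Lemma \ref{critoorth} then shows $\{\alpha',\beta'\}$ is an $N$-orthogonal basis of $\mathcal{L}$, and the algorithm returns it.

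The step I expect to be the main obstacle is understanding why one reduction need not suffice and why exactly one more does. The first CVP only controls the combinations $\alpha'+t\beta$ with $t\in\mathbb{Z}_p$; the reverse combinations $s\alpha'+\beta$, which Lemma \ref{critoorth} equally requires, automatically obey the orthogonality identity when $N(\alpha')<N(\beta)$ (by domination) and, after a unit rescaling, when $N(\alpha')=N(\beta)$, but can genuinely fail when $N(\alpha')>N(\beta)$---which is possible only when $N(\alpha')/N(\beta)$ is a positive integer power of $p$---and a second reduction is then forced, which strictly lowers the smaller of the two norms and lands back in the dominated regime. This dichotomy is also what ties the argument to rank $2$: in rank $\ge 3$, reducing one basis vector modulo the span of the others can spoil orthogonality relations already secured among them, so the naive iteration is not evidently finite and a genuinely different idea would be needed.
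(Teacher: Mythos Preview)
Your proof is correct and follows the same two-step CVP reduction as the paper: reduce $\alpha$ modulo $\mathbb{Z}_p\beta$ to get $\alpha'$, then reduce $\beta$ modulo $\mathbb{Z}_p\alpha'$ to get $\beta'$, and finish via Corollary~\ref{pointtolattice} and Lemma~\ref{critoorth}. The only real difference lies in the verification. The paper always performs both reductions and then, writing $\beta'=\beta+l_0\alpha'$, checks $N(\alpha'+k\beta')\ge\max(N(\alpha'),N(k\beta'))$ through the algebraic identity $\alpha'+k\beta'=(1+kl_0)\alpha'+k\beta$, splitting on whether $k\in p\mathbb{Z}_p$ (so $1+kl_0$ is a unit and the minimality of $\alpha'$ applies) or $k\in\mathbb{Z}_p^{\times}$ (so the minimality of $\beta'$ applies). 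You instead split on the relative sizes of $N(\alpha')$ and $N(\beta)$: when $N(\alpha')\le N(\beta)$ you skip the second CVP and dispatch the only delicate subcase $N(\alpha')=N(\beta)$ by a unit rescaling; when $N(\alpha')>N(\beta)$ the second reduction yields the strict inequality $N(\beta')<N(\alpha')$, so the remaining direction is immediate by the ultrametric inequality. Your route buys an early exit and a cleaner endgame in the strict case; the paper's route is uniform (no norm comparison before the second CVP) at the cost of the extra algebraic identity.
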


\begin{proof}
By Corollary \ref{pointtolattice}, we can find $w_0\in\mathcal{L}(\beta)$ such that $N(\alpha+w_0)=\min\{N(\alpha+w):w\in\mathcal{L}(\beta)\}$. Set $\alpha'=\alpha+w_0$. Similarly, we can find $v_0\in\mathcal{L}(\alpha')$ such that $N(v_0+\beta)=\min\{N(v+\beta):v\in\mathcal{L}(\alpha')\}$. Set $\beta'=v_0+\beta$. We claim that $\alpha',\beta'$ is an $N$-orthogonal basis of the lattice $\mathcal{L}=\mathcal{L}(\alpha,\beta)$.

Firstly, it is clear that we have $\mathcal{L}(\alpha,\beta)=\mathcal{L}(\alpha',\beta)=\mathcal{L}(\alpha',\beta')$. It remains to prove that $\alpha',\beta'$ is $N$-orthogonal. By Corollary \ref{pointtolattice}, we have $N(v+\beta')=\max(N(v),N(\beta'))$ for all $v\in\mathcal{L}(\alpha')$. 

Below, we prove $N(\alpha'+w)=\max(N(\alpha'),N(w))$ for all $w\in\mathcal{L}(\beta')$. We write $w=k\beta'(k\in\mathbb{Z}_p)$ and $v_0=l_0\alpha'(l_0\in\mathbb{Z}_p)$. Then
$$\alpha'+w=\alpha'+k\beta'=\alpha'+k(l_0\alpha'+\beta)=(1+kl_0)\alpha'+k\beta.$$
We distinguish two cases:

Case 1: $k\in p\mathbb{Z}_p$. Then $1+kl_0\in\mathbb{Z}_p^{\times}$. We have 
$$N(\alpha'+w)=N\left(\alpha'+\frac{k}{1+kl_0}\beta\right)\geq N(\alpha')$$
by the definition of $\alpha'$. By Lemma \ref{twosum}, we have $N(\alpha'+w)=\max(N(\alpha'),N(w))$.

Case 2: $k\in\mathbb{Z}_p^{\times}$. We have
$$N(\alpha'+w)=N(\alpha'+k\beta')=N\left(\frac{1}{k}\alpha'+\beta'\right)\geq N(\beta')=N(k\beta')=N(w)$$
by the definition of $\beta'$. By Lemma \ref{twosum}, we have $N(\alpha'+w)=\max(N(\alpha'),N(w))$.

Thus, we always have $N(v+\beta')=\max(N(v),N(\beta'))$ for all $v\in\mathcal{L}(\alpha')$ and $N(\alpha'+w)=\max(N(\alpha'),N(w))$ for all $w\in\mathcal{L}(\beta')$. By Lemma \ref{critoorth}, $\alpha',\beta'$ is $N$-orthogonal. We are done.
\end{proof}

\textbf{Question.} Is this true for lattices with rank greater than or equal to three? We leave it as an open problem.

\vspace{0.5cm}

\textbf{Acknowledgments}: We thank Dr. Zhaonan Wang and Dr. Chi Zhang for helpful discussion. This work was supported by National Natural Science Foundation of China (No. 12271517) and National Key Research and Development Project of China (No. 2018YFA0704705).

\end{document}